\newtheorem{theorem}{Theorem}[section]
\newtheorem{prop}[theorem]{Proposition}
\theoremstyle{definition}
\theoremstyle{remark}
\numberwithin{equation}{section}
\begin{document}

\baselineskip=17pt

\title[]
{Multiplicative isometries on the Smirnov class}

\author{Osamu~Hatori}
\address{Department of Mathematics, Faculty of Science, 
Niigata University, Niigata 950-2181 Japan}
\curraddr{}
\email{hatori@math.sc.niigata-u.ac.jp}

\author{Yasuo~Iida}
\address{Department of Mathematics, Iwate Medical University, Yahaba, Iwate 028-3694 Japan}
\email{yiida@iwate-med.ac.jp}

\thanks{The first author was partly 
supported by the Grants-in-Aid for Scientific 
Research, Japan Society for the Promotion of Science.}

\keywords{isometries, Smirnov class, multiplicative maps}

\subjclass[2000]{30H15,32A38,46B04}

\maketitle

\begin{abstract}
We show that $T$ is a surjective {\it multiplicative} (but not {necessarily} linear) isometry  from the Smirnov class on the open unit disk, the ball, or the polydisk onto itself, if and only if there exists a holomorphic automorphism $\Phi$
such that 
$T(f)=f\circ \Phi$ for every class element $f$ or 
$T(f)=\overline{f\circ\overline{\Phi}}$ for every class element $f$, where the automorphism 
$\Phi$ is a unitary transformation in the case of the ball and 
$\Phi(z_1,\dots ,z_n)=
(\lambda_1z_{i_1},\dots , \lambda_nz_{i_n})$ for
$|\lambda_j|=1$, $1\le j\le n$ and $(i_1, \dots , i_n)$ 
is some permutation of the integers from $1$ through $n$ in 
the case of the $n$-dimension polydisk.
\end{abstract}
\section{Introduction}
Linear isometries on the Smirnov class on 
the ball and the polydisk of 
one and several complex variables 
were studied by 
Stephenson \cite{st} and the complete representation was given. 
A long tradition of inquiry seeks representations of 
linear isometries not only on spaces of holomorphic functions but on Banach spaces and Banach algebras, and in 
 recent years linearity for given isometries are sometimes not prerequisite.
In this paper we study {\it multiplicative} isometries on the Smirnov class on the polydisk or on the open ball of $n$-complex variables for $n\ge 1$. 
The space of $n$-complex variables $z=(z_1,\dots, z_n)$ 
is denoted by 
${\mathbb C}^n$. The unit polydisk $\{z\in {\mathbb C}^n:\text{$|z_j|< 1$, $1\le j \le n$}\}$ is denoted by
$U^n$ and the distinguished boundary 
${\mathbb T}^n$ is $\{z\in {\mathbb C}^n:
\text{$|z_j|= 1$, 
$1\le j \le n$}\}$.
The unit ball $\{z\in {\mathbb C}^n:\sum_{j=1}^n|z_j|^2<1\}$ is 
denoted by $B_n$ and $S_n$ its boundary. 
 In this paper $X$ denotes the unit polydisk or 
the unit ball for $n\ge 1$ and $\Gamma$ denotes 
${\mathbb T}^n$ for $X=U^n$ and $S_n$ for $X=B_n$. 
The normalized (in the sense that 
$\sigma(\Gamma)=1$) Lebesgue measure on $\Gamma$ is 
denoted by $d\sigma$.

The Nevanlinna class $N(X)$ on $X$ is 
defined as the set of all holomorphic  functions $f$ on $X$ 
such that  

\[
\sup_{0<r<1}\int \log^+|f(rz)|d\sigma(z)<\infty
\]
or equivalently
\[
\sup_{0<r<1}\int \log(1+|f(rz)|)d\sigma(z)<\infty
\]
hold. 
It is known that $f\in N(X)$ has a finite nontangential 
limit, also denoted by $f$, almost everywhere on $\Gamma$.
The 
Smirnov class $N_{*}(X)$ is defined as the set of all 
$f\in N(X)$ 
which satisfies the equality
\[
\sup_{0<r<1}\int\log^+|f(rz)|d\sigma(z)
=
\int\log^+|f(z)|d\sigma(z)
\]
or equivalently 
\[
\sup_{0<r<1}\int \log(1+|f(rz)|)d\sigma(z)
=
\int \log (1+|f(z)|)d\sigma(z).
\]
Define a metric 
\[
d(f,g)=\int \log(1+|f(z)-g(z)|)d\sigma(z)
\]
for $f,g \in N_{*}(X)$. With the metric $d(\cdot,\cdot)$\, the Smirnov class $N_{*}(X)$ is an 
$F$-algebra. 
Recall that an $F$-algebra is a topological algebra in which the topology arises from a complete metric. For each $0<p\le\infty$, the Hardy space is denoted by $H^p(X)$ with the norm $\|\cdot\|_p$, and $H^p(X)$ is a subspace of $N_{*}(X)$, in particular, $H^{\infty}(X)$ is a dense subalgebra
of $N_{*}(X)$. The convergence on the metric is 
stronger than uniform convergence on compact subsets 
of $X$.

 In this paper we consider surjective isometries on 
the Smirnov class, we do not assume but prove the complex or conjugate linearity for these isometries.

\section{The Results}
Let $\alpha$ be a complex number with $|\alpha|=1$ 
and $\Phi$ a unitary transformation for the 
case of $X=B_n$ and $\Phi(z_1,\dots ,z_n)=
(\lambda_1z_{i_1},\dots , \lambda_nz_{i_n})$, where 
$|\lambda_j|=1$, $1\le j\le n$ and $(i_1, \dots , i_n)$ 
is some permutation of the integers from $1$ through $n$.
Then $T(f)=\alpha f\circ \Phi$ 
defines a complex-linear isometry from $N_{*}(X)$ onto 
itself (cf. \cite[Corollary 2.3]{st}). We also 
see that $T(f)=\alpha\overline{f\circ\overline{\Phi}}$ 
defines a conjugate linear isometry from 
$N_{*}(X)$ onto itself. {In particular, if $\alpha =1$, then these operators $T's$ are {\it multiplicative}.
We show that the converse holds}: multiplicative {isometries} on $N_{*}(X)$ are of one of the forms above. 
We do not assume linearity for these isometries and a crucial part of a proof is to observe that they are complex or conjugate linear. If the Smirnov class were a normed space, it could be much easier to handle by a direct application of the celebrated theorem of Mazur and Ulam for isometries between normed linear spaces \cite{mu}(cf. \cite{v}). But it is not the case.
\begin{prop}\label{main}
Let $n$ be a positive interger and let 
$X$ be either $B_n$ or $U^n$. Suppose that  
$T:N_{*}(X)\to N_{*}(X)$ is a surjective isometry. If $T$ is 2-homogeneous in the sense that
$T(2f)=2T(f)$ holds for every $f\in N_{*}(X)$, 
then either of the following formulas holds:
\begin{enumerate}
\item
$T(f)=\alpha f\circ \Phi$ for every $f \in N_{*}(X)$;
\item
$T(f)=\alpha \overline{f\circ \overline{\Phi}}$ for every
$f\in N_{*}(X)$,
\end{enumerate}
where $\alpha$ is a complex number with the unit modulus 
and for $X=B_n$, $\Phi$ is unitary transformation;
for $X=U^n$, $\Phi(z_1,\dots ,z_n)=
(\lambda_1z_{i_1},\dots , \lambda_nz_{i_n})$, where 
$|\lambda_j|=1$, $1\le j\le n$ and $(i_1, \dots , i_n)$ 
is some permutation of the integers from $1$ through $n$.
\end{prop}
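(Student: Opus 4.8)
The plan is to reduce to Stephenson's classification of the \emph{linear} surjective isometries of $N_{*}(X)$ by proving that any $T$ as in the hypotheses is automatically complex- or conjugate-linear; obtaining this automatic linearity, without recourse to the Mazur--Ulam theorem (unavailable since $N_{*}(X)$ is not normed), is where almost all of the work lies. Setting $f=0$ in $T(2f)=2T(f)$ gives $T(0)=0$. The first device is to extract from $d$ a \emph{homogeneous} invariant still preserved by $T$: a nonzero element of $N_{*}(X)$ has $\log|\cdot|\in L^{1}(\sigma)$, and monotone convergence yields, for $f\neq g$,
\[
\rho(f,g):=\int_{\Gamma}\log|f-g|\,d\sigma=\lim_{k\to\infty}\Bigl(d(2^{k}f,2^{k}g)-k\log 2\Bigr).
\]
Iterating $2$-homogeneity gives $T(2^{k}f)=2^{k}T(f)$, and $T$ preserves $d$, so the right side is unchanged under $(f,g)\mapsto(Tf,Tg)$: thus $T$ preserves $\rho$. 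The functional $\rho$ is translation-invariant, invariant under $h\mapsto-h$, and scales as $\rho(2f,2g)=\rho(f,g)+\log 2$.

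The central step is real-linearity of $T$. Translation- and negation-invariance make every point reflection $h\mapsto a+b-h$ a surjective isometry of $N_{*}(X)$, for both $d$ and $\rho$, fixing $\tfrac{a+b}{2}$ and interchanging $a$ and $b$. I would then run the reflection scheme of Mazur--Ulam and V\"ais\"al\"a to show $T$ preserves midpoints: for a surjective isometry $g$ fixing $a$ and $b$, the map $g^{\#}=\sigma g^{-1}\sigma$ (with $\sigma$ the reflection through $\tfrac{a+b}{2}$) also fixes $a,b$, and one studies the displacement of $\tfrac{a+b}{2}$ under $g^{\#}g$. In a normed space the homogeneity of the norm makes that displacement double, and a supremum argument using the triangle inequality forces it to vanish. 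Here the triangle inequality of the genuine metric $d$ still controls the supremum, but the missing homogeneity has to be supplied from the scaling law for $\rho$ and the $2$-homogeneity of $T$. The delicate point --- and the main obstacle --- is that the naive homogenization $\exp\int_{\Gamma}\log|\cdot|\,d\sigma$ is \emph{not} subadditive on $N_{*}(X)$ (e.g.\ $h_{1}=1$, $h_{2}=e^{4z}$ on the disk), so one cannot simply declare it a norm and quote Mazur--Ulam; the doubling/contraction estimate must be pushed through by hand using the analytic structure of $N_{*}(X)$. Granting midpoint-preservation, $T(0)=0$ and $2$-homogeneity give additivity, hence $T(rf)=rT(f)$ for dyadic $r$, and then for all real $r$ since $T$ is continuous; so $T$ (and $T^{-1}$) are real-linear.

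Next I would upgrade to complex- or conjugate-linearity. Real-linearity turns $d(T(rf),0)=d(rf,0)$ into $\int_{\Gamma}\log(1+\lambda|Tf|)\,d\sigma=\int_{\Gamma}\log(1+\lambda|f|)\,d\sigma$ for \emph{every} $\lambda>0$; since both sides are real-analytic in $\lambda$ and pin down, via their Cauchy transforms, the distribution of the respective modulus, $|Tf|$ and $|f|$ are equimeasurable on $\Gamma$. Consequently $\theta\mapsto\int_{\Gamma}\log\bigl(1+|\cos\theta\,Tf+\sin\theta\,T(if)|\bigr)\,d\sigma$ is constant in $\theta$; a Fourier/first-variation analysis of this identity, run for all $f$ and using that $Tf$ and $T(if)$ are holomorphic, forces $\operatorname{Re}\bigl(Tf(z)\,\overline{T(if)(z)}\bigr)=0$ a.e., i.e.\ $T(if)$ is pointwise a real multiple of $iTf$. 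Real-linearity of $T$ makes that multiple independent of $f$, and holomorphy forces it to be the constant $1$ or the constant $-1$. In the first case $T$ is complex-linear; in the second it is conjugate-linear, and then $C\circ T$ is complex-linear, where $C(g)=\overline{g\circ\overline{\,\cdot\,}}$ is the standard conjugate-linear surjective isometry of $N_{*}(X)$.

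Finally I would invoke the representation behind \cite[Corollary~2.3]{st}: a complex-linear surjective isometry of $N_{*}(X)$ has the form $g\mapsto\alpha\,g\circ\Phi$ with $|\alpha|=1$, $\Phi$ unitary for $X=B_{n}$ and $\Phi$ a unimodularly weighted coordinate permutation for $X=U^{n}$. If this map is $T$ we get (1); if it is $C\circ T$, then $T=C\circ(g\mapsto\alpha\,g\circ\Phi)$, which re-expresses as $f\mapsto\alpha'\,\overline{f\circ\overline{\Phi'}}$ with $|\alpha'|=1$ and $\Phi'$ of the same type, i.e.\ (2). The principal difficulty is the second paragraph --- making a Mazur--Ulam-type argument go through in the non-locally-convex $F$-algebra $N_{*}(X)$ despite the non-homogeneity of $d$ and the failure of subadditivity of its natural homogenization --- with the rigidity computation of the third paragraph a secondary difficulty.
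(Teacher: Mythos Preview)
Your route is genuinely different from the paper's, but it carries an acknowledged gap at its central step. You concede that $\exp\int_{\Gamma}\log|\cdot|\,d\sigma$ is not subadditive and offer no concrete mechanism for making the V\"ais\"al\"a doubling estimate go through without it; the $d$-triangle inequality can bound the displacement set, but the doubling step needs a homogeneous comparison that neither $d$ nor your $\rho$ supplies, and ``granting midpoint-preservation'' is exactly what must be proved. The third paragraph has a parallel problem: the first variation of $\theta\mapsto\int_{\Gamma}\log\bigl(1+|\cos\theta\,Tf+\sin\theta\,T(if)|\bigr)\,d\sigma$ at $\theta=0$ yields only an integral condition $\int_{\Gamma}\operatorname{Re}\bigl(Tf\,\overline{T(if)}\bigr)\,|Tf|^{-1}(1+|Tf|)^{-1}\,d\sigma=0$, not the pointwise vanishing $\operatorname{Re}\bigl(Tf\,\overline{T(if)}\bigr)=0$ a.e.\ you claim; and the assertion that the real scalar relating $T(if)$ to $iTf$ is independent of $f$ does not follow from real-linearity alone.

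The paper avoids both difficulties by never attempting Mazur--Ulam on $N_{*}(X)$ itself. Two-homogeneity gives $d(2^{-k}f,2^{-k}g)=d(2^{-k}Tf,2^{-k}Tg)$, and since $2^{k}\log(1+2^{-k}x)\uparrow x$ one obtains (as in Stephenson's Theorem~2.1) that $\|f-g\|_{1}=\|Tf-Tg\|_{1}$, so $T(H^{1})=H^{1}$ and $T|_{H^{1}}$ is a surjective $\|\cdot\|_{1}$-isometry fixing $0$. Ordinary Mazur--Ulam on the \emph{normed} space $H^{1}(X)$ then gives real-linearity. An induction using the Taylor remainder of $\log(1+x)$ bootstraps this to $T(H^{N})=H^{N}$ with $\|Tf\|_{N}=\|f\|_{N}$ for every $N$; letting $N\to\infty$ yields $T(H^{\infty})=H^{\infty}$ with $T|_{H^{\infty}}$ a surjective $\|\cdot\|_{\infty}$-isometry. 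Ellis's theorem on real-linear isometries of uniform algebras then forces $T|_{H^{\infty}}$ to be complex- or conjugate-linear, density of $H^{\infty}$ in $N_{*}(X)$ extends this to all of $N_{*}(X)$, and Stephenson's classification finishes as in your last paragraph. The idea you are missing is to pass through the Hardy-space filtration $H^{\infty}\subset\cdots\subset H^{2}\subset H^{1}$, where genuine norms are available.
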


\begin{proof}
A crucial part of a proof is 
to observe the linearity (complex or conjugate) of $T$, but a difficulty comes from the fact that $N_{*}(X)$ is not a normed space. Instead of directing $T$ on $N_{*}(X)$ we consider the restrictions of $T$ on certain subspaces which is a normed space; we assert first that $T(H^N(X))=H^N(X)$ for each $1\le N\le \infty$, then applying the Mazur-Ulam theorem \cite{mu}(cf. \cite{v}) concerning to isometries between {\it normed spaces} and a theorem of Ellis \cite[Theorem]{elli} on isometries between uniform algebras to observe that $T$ is complex-linear or conjugate linear. Once the linearity is established one can simply apply a theorem of Stephenson \cite{st} to prove the desired conclusion.

Let $f,g\in H^1(X)$. It requires only elementary calculation applying the 2-homogenuity of $T$ to check that 
the equation
\[
\int \log (1+|\frac{f}{2^n}-\frac{g}{2^n}|)d\sigma
=
\int \log (1+|\frac{T(f)}{2^n}-\frac{T(g)}{2^n}|)d\sigma
\]
holds. 
In a way similar to the proof of Theorem 2.1 in \cite{st} 
we see that 
$T(H^1(X))=H^1(X)$ and the restricted map $T|_{H^1(X)}$ is 
an isometry with respect to the metric induced by 
the $H^1$-norm $\|\cdot \|_1$. 
Due to the Mazur-Ulam theorem \cite{mu} $T|_{H^1(X)}$ 
is a real-linear isometry since $T(0)=0$, which is observed by just letting $f=0$ in the equation $T(2f)=2T(f)$. 

For each positive integer $N$ let the function $\theta_N$ 
on the set of the non-negative real numbers be defined as 
\begin{equation}\label{theta}
\theta_N(x)=
\begin{cases}
\frac{(-1)^N}{N+1}, \quad x=0\\
\left(\log (1+x)-\sum_{j=0}^{N-1}
\frac{(-1)^j}{j+1}x^{j+1}\right)/x^{N+1}, 
\quad x>0.
\end{cases}
\end{equation}
Note that $\sum_{j=0}^{N-1}
\frac{(-1)^j}{j+1}x^{j+1}$ corresponds to the partial sum of the Taylar series of $\log(1+x)$ for $|x|<1$. It is easy to check by a simple calculation that $\theta_N$ is continuous on $x\ge 0$ and $\left((-1)^Nx^{N+1}\theta_N(x)\right)' 
= \frac{x^N}{1+x}$ holds, hence the inequality $(-1)^N\theta _N(x)\ge 0$ holds for every $x\ge 0$. We will make use of the 
inequality $(-1)^N\theta_N(x)\ge 0$ later in applying 
Fatou theorem. 

We claim that equations
$T(H^N(X))=H^N(X)$  and   
$\|f\|_N=\|T(f)\|_N$ hold for every positive integer $N$. We prove the two equations by induction on $N$. 
We have already learnt that it is in the case for $N=1$. 
Assume that it is true that 
the two equations hold for $N=1,\dots , m$. 
Suppose that $f\in H^{m+1}(X)$. Then $f\in H^k(X)$ for every $k=1, \dots, m$ as $H^p(X)\subset H^q(X)$ for $p\ge q$. We have already learnt that $T$ is real-linear on $H^1(X)$, hence $\int \frac{|f|^k}{l}d\sigma=\int \frac{|T(f)|^k}{l}d\sigma$ holds as $\|f\|_k=\|T(f)\|_k$ for every $k=1,\dots, m$ and every positive integer $l$. Since $T$ is isometric on $N_{*}(X)$ and linear on $H^1(X)$ the equation $\int\log (1+\frac{|f|}{l})d\sigma=\int \log (1+\frac{|T(f)|}{l})d\sigma$ is observed for every positive integer $l$. Therefore 
\begin{multline*}
(-1)^m\left(
\int \log (1+\frac{|f|}{l})d\sigma 
- 
\sum_{j=0}^{m-1}\frac{(-1)^j}{j+1}\int 
\frac{|f|^{j+1}}{l}d\sigma\right) \\
=
(-1)^m\left(
\int \log (1+\frac{|T(f)|}{l})d\sigma 
- 
\sum_{j=0}^{m-1}\frac{(-1)^j}{j+1}\int 
\frac{|T(f)|^{j+1}}{l}d\sigma\right)
\end{multline*}
holds for every positive integer $l$.
It follows that 
\begin{equation}\label{moto}
\int |f|^{m+1}(-1)^m\theta_m(|f|^m/l)d\sigma 
=
\int |T(f)|^{m+1}(-1)^m\theta_m(|T(f)|^m/l)d\sigma.
\end{equation}
Letting $l\to \infty$ and applying the Lebesgue theorem on
dominated convergence to the left-hand side and 
the Fatou theorem to the right-hand side we have
\[
\int |f|^{m+1}(-1)^m\theta_m(0)d\sigma 
\ge 
\int |T(f)|^{m+1}(-1)^m\theta_m(0)d\sigma.
\]
Thus $|T(f)|^{m+1}$ is integrable and letting 
$l\to \infty$ again in the equation (\ref{moto}) 
and since $\theta_m(0)=\frac{(-1)^m}{m+1}\ne 0$ we have 
that 
\[
\int |f|^{m+1}d\sigma = \int |T(f)|^{m+1}d\sigma
\]
by the Lebesgue theorem on dominated convergence in 
both sides at this time. We conclude that $T(H^{m+1}(X))\subset H^{m+1}(X)$ and {$\|f\|_{m+1}=\|T(f)\|_{m+1}$} hold. Conversely $H^{m+1}(X)\subset T(H^{m+1}(X))$ holds in a way similar to the above.

Since $d\sigma$ is a finite measure we verify that 
\[
\lim_{m\to \infty}\|f\|_m=\|f\|_{\infty}
\]
holds for every $f\in H^{\infty}(X)$. 
Since $\|f\|_m=\|T(f)\|_m$ for every $f\in 
H^{\infty}(X)$ and $\lim_{m\to \infty}\|T(f)\|_{m}
=\|T(f)\|_{\infty}$ we see that $T(f)\in H^{\infty}(X)$ 
and $\|f\|_{\infty}=\|T(f)\|_{\infty}$ for every 
$f\in H^{\infty}(X)$. In a way similar we see 
that 
$f\in H^{\infty}(X)$ if $T(f)$ is in $H^{\infty}(X)$. 
Thus $T|_{H^{\infty}(X)}$ is a surjective isometry 
with respect to $\|\cdot\|_{\infty}$ from 
$H^{\infty}(X)$ onto itself. Since we may suppose that 
$H^{\infty}(X)$ is a uniform algebra on the maximal 
ideal space and the maximal ideal space is connected 
by the \v Silov idempotent theorem, we see that 
$T|_{H^{\infty}(X)}$ is complex-linear or 
conjugate linear (i.e. $T(\alpha f+g)=\overline{\alpha}
T(f)+T(g)$ for $f,g\in H^{\infty}(X)$) by a theorem of 
Ellis \cite[Theorem]{elli}. 

Suppose that $T|_{H^{\infty}(X)}$ is complex-linear. 
Then $T$ is complex-linear on $N_{*}(X)$ since $H^{\infty}(X)$ 
is dense in $N_{*}(X)$ and the convergence in the original 
metric is 
stronger than uniform convergence on compact subets of $X$.
It follows by Corollary 2.3 in \cite{st} that the first fomula of the conclusion is satisfied. 

Suppose that $T|_{H^{\infty}(X)}$ is a conjugate linear map. 
Then $T$ is conjugate linear on $N_{*}(X)$ as before.
Define a map $\widetilde{T}:N_{*}(X)\to N_{*}(X)$ by
\[
\widetilde{T}(f)(z_1,\dots, z_n)=
T(\overline{f(\overline{z_1},\dots, \overline{z_n})})
\]
for $f\in N_{*}(X)$. Then $\widetilde{T}$ is 
complex-linear isometry from $N_{*}(X)$ onto itself. 
Applying  Corollary 2.3 in \cite{st} to  
$\widetilde{T}$ we see that $T$ is represended by the second 
formula of the conclusion. 
\end{proof}
We say a map $T:N_{*}(X)\to N_{*}(X)$ is multiplicative 
if $T(fg)=T(f)T(g)$ for every $f,g\in N_{*}(X)$.
In this section we characterize multiplicative 
isometries from $N_{*}(X)$ onto itself.
Let $\Phi$ be a unitary transformation for $X=B_n$ and 
for $X=U^n$ $\Phi(z_1,\dots ,z_n)=
(\lambda_1z_{i_1},\dots , \lambda_nz_{i_n})$, where 
$|\lambda_j|=1$, $1\le j\le n$ and $(i_1, \dots , i_n)$ 
is some permutation of the integers from $1$ through $n$.
Then $T(f)=f\circ\Phi$ defines a complex-linear 
multiplicative 
isometry from $N_{*}(X)$ onto itself and 
$T(f)=\overline{f\circ\overline{\Phi}}$ defines a 
conjugate linear multiplicative isometry from 
$N_{*}(X)$ onto itself. We show that they are 
only muliplicative isometries from $N_{*}(X)$ onto 
itself.
\begin{theorem}
Let $T$ be a multiplicative (not necessarily linear) 
isometry from $N_{*}(X)$ onto itself. 
Then there exists a holomorphic {automorphism} $\Phi$ on $X$ such that either of the following holds:
\begin{enumerate}
\item
$T(f)=f\circ \Phi$ for every $f\in N_{*}(X)$;
\item
$T(f)=\overline{f(\overline{\Phi})}$ for every $f\in N_{*}(X)$,
\end{enumerate}
where 
$\Phi$ is unitary transformation for $X=B_n$;
$\Phi(z_1,\dots ,z_n)=(\lambda_1z_{i_1},\dots , \lambda_nz_{i_n})$ for $X=U^n$, where 
$|\lambda_j|=1$ for every $1\le j\le n$ and $(i_1, \dots , i_n)$ 
is some permutation of the integers from $1$ through $n$.
\end{theorem}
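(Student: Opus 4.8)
The plan is to deduce the theorem from Proposition~\ref{main} by showing that a multiplicative surjective isometry $T$ is automatically $2$-homogeneous; in fact I will show that $T(c)=c$ for every constant function $c$ up to the conjugation symmetry, so in particular $T(2)=2$, whence $T(2f)=T(2)T(f)=2T(f)$ for all $f\in N_{*}(X)$. The easy normalizations come first: from multiplicativity $T(1)$ and $T(0)$ are idempotents of $N_{*}(X)$, hence (as $X$ is connected) equal to $0$ or $1$; surjectivity rules out $T(1)=0$ and $T(0)=1$, so $T(1)=1$ and $T(0)=0$. Being a surjective isometry, $T$ is a homeomorphism and $T^{-1}$ is again a multiplicative isometry.

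Next I would characterize the unimodular constants intrinsically. Put $U=\{f\in N_{*}(X):f\text{ is invertible in }N_{*}(X)\text{ and }\sup_{n\in\mathbb{Z}}d(f^{n},0)<\infty\}$. If $f\in U$, then letting $n\to+\infty$ the boundedness of $\int\log(1+|f|^{n})\,d\sigma$ forces $|f|\le1$ a.e.\ on $\Gamma$, and applying this to $f^{-1}$ gives $|f|\ge1$ a.e.; so $|f|=1$ a.e.\ on $\Gamma$, and then by the Smirnov maximum principle (used for $f$ and for $f^{-1}$) $|f|\equiv1$ on $X$, so $f$ is a unimodular constant. Conversely every unimodular constant lies in $U$, so $U$ is exactly the set of unimodular constant functions. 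Since $U$ is defined only from the metric and the product, $T(U)=U$; thus $T$ maps the unimodular constants bijectively onto themselves, and in particular $|T(\lambda)|\equiv1$ for every unimodular constant $\lambda$.

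The hard part will be pinning down $T$ on the positive real constants. Fix $t>0$. For a unimodular constant $\lambda$ we have $T(t\lambda)=T(t)T(\lambda)$ with $|T(\lambda)|\equiv1$, so
\[
\int\log(1+|T(t)|\,|T(\lambda)-1|)\,d\sigma = d(T(t)T(\lambda),T(t)) = d(t\lambda,t) = \log(1+t\,|\lambda-1|),
\]
and since $T$ is an isometry with $T(1)=1$ we also get $|T(\lambda)-1|=|\lambda-1|$; as $|\lambda-1|$ runs over $[0,2]$ this yields
\[
\int\log(1+s\,|T(t)|)\,d\sigma=\log(1+st),\qquad 0\le s\le2 .
\]
Let $\nu$ be the distribution of $|T(t)|$ under $\sigma$. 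Using $\log(1+x)\ge x/(1+x)$ and letting $s\to0^{+}$ (monotone convergence) gives $\int|T(t)|\,d\sigma\le t$, while strict concavity of $\rho\mapsto\log(1+s\rho)$ together with Jensen's inequality gives $\int|T(t)|\,d\sigma\ge t$; hence $\int|T(t)|\,d\sigma=t$ and equality in Jensen forces $\nu=\delta_{t}$, i.e.\ $|T(t)|=t$ a.e.\ on $\Gamma$. Since $T(t)$ is invertible in $N_{*}(X)$, both $T(t)/t$ and its reciprocal lie in $N_{*}(X)$ with unimodular boundary values, so the Smirnov maximum principle again shows $T(t)/t$ is a unimodular constant $\omega(t)$. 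Multiplicativity makes $t\mapsto\omega(t)$ a continuous homomorphism $(\mathbb{R}_{>0},\cdot)\to(\mathbb{T},\cdot)$, hence $\omega(t)=t^{ic}$ for some $c\in\mathbb{R}$ and $T(t)=t^{1+ic}$. Finally $d(T(t),T(1))=d(t,1)$ and the fact that $T(t)$ is constant give $|t^{1+ic}-1|=|t-1|$, i.e.\ $\cos(c\log t)=1$, for all $t>0$, which forces $c=0$; thus $T(t)=t$ for every $t>0$.

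It follows that $T(2)=2$ (as a constant function), hence $T(2f)=2T(f)$ for all $f\in N_{*}(X)$, so Proposition~\ref{main} applies and gives either $T(f)=\alpha\,f\circ\Phi$ for all $f$ or $T(f)=\alpha\,\overline{f\circ\overline{\Phi}}$ for all $f$, with $|\alpha|=1$ and $\Phi$ of the stated form. Evaluating at the constant function $2$ gives $2\alpha=T(2)=2$, so $\alpha=1$, which is precisely conclusion (1) or (2). The only genuine obstacle is the third paragraph: multiplicativity and the metric do not interact additively, and all the rigidity needed to fix $T$ on the constants is squeezed out of the intrinsic description of the unimodular constants inside $N_{*}(X)$ together with the Jensen/Fatou analysis of the functional equation $\int\log(1+s|T(t)|)\,d\sigma=\log(1+st)$; everything else is formal or is delegated to Proposition~\ref{main}.
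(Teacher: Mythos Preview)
Your argument is correct, but it takes a more elaborate route than the paper's. Both proofs reduce to Proposition~\ref{main} by establishing $T(2)=2$ (so that multiplicativity gives $2$-homogeneity) and then reading off $\alpha=1$ from the value of $T$ on a constant. The difference lies entirely in how $T(2)=2$ is obtained. The paper works directly with $r=\tfrac12$: if $|T(r)|>r$ on a set of positive measure, then for some large $n$ one gets $\int\log(1+|T(r)|^{n})\,d\sigma>\log(1+r^{n})$, contradicting $d(T(r^{n}),0)=d(r^{n},0)$; hence $|T(r)|\le r$ a.e., inversion via $T(r)T(1/r)=1$ gives $|T(1/r)|\ge 1/r$, and the isometry then forces $|T(r)|=r$; finally $d(T(r),1)=d(r,1)$ together with $|T(r)|=\tfrac12$ pins down $T(\tfrac12)=\tfrac12$. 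Your route instead isolates the unimodular constants intrinsically (as the invertible elements with $\sup_{n\in\mathbb{Z}}d(f^{n},0)<\infty$), uses them to manufacture the functional equation $\int\log(1+s|T(t)|)\,d\sigma=\log(1+st)$ for $0\le s\le 2$, and then applies Jensen's inequality (with the equality case) to conclude $|T(t)|=t$ a.e., whence $T(t)=\omega(t)t$ and a short character argument gives $\omega\equiv1$. Your approach proves more (it fixes all positive real constants, not just $2$), and the intrinsic description of the unimodular constants is a nice device; the paper's argument, on the other hand, is considerably shorter and avoids the Jensen/character-theory detour, needing only the single power estimate on $T(\tfrac12)$.
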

\begin{proof}
First we claim that $T(1)=1$ since $T(1)=T(1)^2$ and $T(1)$ is a holomorphic function on a connected open set $X$.
Next we show $T(2)=2$.
Put $r=\frac12$. 
Suppose that $|T(r)|>r$ on a set of positive measure on 
$\Gamma$, where $T(r)$ here is considered as the nontangential limit on $\Gamma$. Then there exists a subset $E$ of positive 
measure and $\varepsilon>0$ with
$|T(r)|\ge (1+\varepsilon )r$ on $E$. 
Since 
$\lim_{n\to \infty}\frac{\log(1+(1+\varepsilon)^nr^n)}
{\log(1+r^n)}=\infty$, there is a positive integer $n_0$ with
\[
\int_E \log(1+(1+\varepsilon)^{n_0}r^{n_0})d\sigma >
\int\log (1+r^{n_0})d\sigma. 
\]
Thus
\begin{multline*}
\int\log(1+r^{n_0})d\sigma=
\int \log (1+|T(r)|^{n_0})d\sigma
\\
\ge
\int_E\log (1+(1+\varepsilon)^{n_0}r^{n_0})
d\sigma
>\int \log (1+r^{n_0})d\sigma,
\end{multline*}
which is a contradiction proving $|T(r)|\le r$. Hence 
$|T(\frac{1}{r})|\ge \frac1r$ holds as $T(r)T(\frac{1}{r})=T(1)=1$. 
Since 
\[
\log(1+\frac{1}{r})=\int\log (1+\frac{1}{r})d\sigma=
\int\log (1+|T(\frac{1}{r})|)d\sigma
\]
we have that $|T(\frac{1}{r})|=\frac{1}{r}$ and 
 $|T(r)|=r$. 
Since $\log (1+(1-r))=d(r,1)=d(T(r),1)$
and $d(T(r),1)=\int\log (1+|1-T(r)|)d\sigma$ 
it is easy to check that $T(\frac12)=\frac12$, hence 
$T(2)=2$ for $T(2)T(\frac12)=1$. 
We have observed that $T$ is a surjective isometry 
which satisfies $T(2f)=2T(f)$ as $T$ is 
multiplicative. It follows by Proposition \ref{main} 
that
\[
T(f)=\alpha f\circ \Phi,\quad f\in N_{*}(X),
\]
or 
\[
T(f)=\alpha \overline{f(\overline{\Phi})},
\quad f\in N_{*}(X),
\]
hold for a complex number $\alpha$ and 
the holomorphic automorphism $\Phi$ 
as described in Proposition \ref{main}.  
The constant $\alpha=1$ is observed as $T(1)=1$, hence
 the conclusion holds.
\end{proof}


\end{document}